\newtheorem{teor}{Theorem}
\newtheorem{defi}{Definition}
\newtheorem{prop}{Proposition}
\newtheorem{remark}{Remark}[section]
\newcommand{\R}{\mathbb{R}}
\newcommand{\M}{\mathcal{M}}
\title{Matrix Pearson equations satisfied by Koornwinder weights in two variables}
\author{Francisco Marcell\'an$^{a,}$\thanks{Partially supported by Ministerio de Econom\'ia y Competitividad of Spain, grant MTM2012--36732--C03--01.}
, Misael E. Marriaga$^{b}$, Teresa E. P\'{e}rez$^{c,}$\thanks{Partially supported by MICINN of Spain
        and by the European Regional Development Fund
        (ERDF) through the grant MTM2011--28952--C02--02, and Junta de
        Andaluc\'{\i}a FQM--0229 and P11--FQM--7276.}
, Miguel A. Pi\~nar$^{c,\dagger}$\\
\\ \small{$^{a}$ Instituto de Ciencias Matem\'aticas (ICMAT) and} \\
\small{Departamento de Matem\'aticas,}\\
\small{ Universidad Carlos III de Madrid (Spain)} \\
~~\\
\small{$^b$ Departamento de Matem\'aticas,}\\
\small{ Universidad Carlos III de Madrid (Spain)} \\
~~\\
\small{$^{c}$ Departamento de
 Matem\'{a}tica Aplicada,}\\
\small{Facultad de Ciencias}\\
 \small{Universidad de Granada (Spain)}}
\date{}
\begin{document}

\maketitle

\medskip

\noindent {\it AMS Subject Classification 2000}: {42C05; 33C50}

\medskip

\noindent {\it Key words}: Orthogonal polynomials in two variables, Koornwinder weights, partial differential equations, matrix
Pearson equations.

\begin{abstract}

We consider Koornwinder's method for constructing orthogonal polynomials in two variables from
orthogonal polynomials in one variable. If semiclassical orthogonal polynomials in one variable
are used, then Koornwinder's construction generates semiclassical orthogonal polynomials in two
variables. We consider two methods for deducing matrix Pearson equations for weight functions
associated with these polynomials, and consequently, we deduce the second order linear partial
 differential operators for classical Koornwinder polynomials.
\end{abstract}


\section{Introduction}

In 1975, T. Koornwinder (\cite{Koor75}) introduced a non--trivial method to generate orthogonal
polynomials in two variables using univariate classical Jacobi polynomials. In fact, he studied
some classes of {\it two--variable analogues of the classical orthogonal polynomials}, and he
proved that all of these classes are eigenfunctions of second order linear partial differential operators.

Recently, in \cite{FPP12}, the authors studied the Koornwinder's construction in a more general
framework. They deduced some additional properties for weight functions associated with this
polynomials and introduced some new examples of bivariate Koornwinder polynomials.

\bigskip

Univariate {\it semiclassical} orthogonal polynomials were introduced for the first
time by E. Hendriksen and H. van Rossum in \cite{HR85} as the natural generalization of the classical
orthogonal polynomials. A weight function $w(x)$ defined over a
bounded or unbounded interval $(a,b)$ is said to be semiclassical if and only if it satisfies the Pearson equation
\begin{equation}\label{Uni-Pearson}
\frac{d}{dx} (\phi(x)\, w(x) ) = \psi(x)\,w(x),
\end{equation}
where $\phi(x)$ and $\psi(x)$ are fixed polynomials with $\deg\phi = p\ge 0$ and $\deg\psi=q\ge 1$,
respectively, and the boundary conditions
\begin{equation}\label{Uni-boundary}
\lim_{x\to a} \phi(x)\,w(x)\,p(x) = \lim_{x\to b} \phi(x)\,w(x)\,p(x) = 0,
\end{equation}
for every polynomial $p(x)$. Of course, the polynomials $\phi(x)$ and $\psi(x)$ in \eqref{Uni-Pearson} are not unique. This fact motivates the definition of \textit{class} of a semiclassical weight function introduced by P. Maroni in \cite{maroni1987} (see also \cite{maroni1991}). The class $s$ of a weight function $w(x)$ is defined as
\begin{equation}
s=\min \max \{\deg(\phi)-2,\deg(\psi)-1\},
\end{equation}
where the minimum is taken over all the polynomials $\phi$ and $\psi$ such that $w(x)$ satisfies the Pearson equation \eqref{Uni-Pearson}.

In \cite{maroni1991}, the author also proved that
orthogonal polynomials associated with semiclassical weight functions satisfy the difference--differential
equation
\begin{equation}\label{diff-diff}
\mathcal{L}[p_n] \equiv \phi(x) \,p_n''(x) + \psi(x)\, p'_n(x) = \sum_{i=n-s}^{n+s} \lambda_{n,i}\,p_i(x),
\quad n\ge s,
\end{equation}
where $s$ denotes the class of $w(x)$.

\bigskip

Naturally, the case $s=0$ reduces to the classical weight functions in one variable. Notice that
in this case \eqref{diff-diff} reads
$$
\mathcal{L}[p_n] \equiv \phi(x) \,p''_n(x) + \psi(x)\, p'_n(x) = \lambda_n\,p_n,\quad n\ge 0,
$$
where $\phi(x)$ and $\psi(x)$ are fixed polynomials with $\deg\phi \le 2$ and $\deg\psi= 1$, and
$\lambda_n\neq 0, n \ge 1$. Thus, the associated orthogonal polynomials are
eigenfunctions of the second order linear differential operator
$$
\mathcal{L}[\cdot] \equiv \phi(x) \,\frac{d^2}{dx^2} + \psi(x)\, \frac{d}{dx}.
$$
Bochner  (\cite{Bo29}) proved that Jacobi, Laguerre and Hermite orthogonal polynomials are the only
families of univariate orthogonal polynomials satisfying the above differential equation.

\bigskip

Classical and semiclassical weight functions in two variables can be defined by means of a bivariate
extension of the above definitions. In that case, the Pearson equation becomes
a matrix Pearson equation with matrix polynomial coefficients, and the derivative is replaced by
the usual divergence operator
$$\textnormal{div}\,(\Phi\, w(x,y)) = \Psi\,w(x,y),$$
where $\Phi$ is a $2\times 2$ symmetric polynomial matrix, and $\Psi$ is a $2\times 1$ polynomial vector,
as we will study in Section \ref{semi-section}.

The symmetric character of the matrix $\Phi$ is connected with the fact that orthogonal polynomials
associated with a semiclassical weight function $w(x,y)$ satisfy a difference--differential equation
whose coefficients are the entries of the matrices $\Phi$ and $\Psi$. In the classical case (when the
degrees of the entries of the matrices are less than or equal to $2$ and $1$, respectively), the
difference--differential equation becomes a partial differential equation for the orthogonal polynomials.

\bigskip

In this work, we study bivariate Koornwinder weight functions constructed from semiclassical
univariate weights. In this case, it was proved in \cite{FPP12} that $w(x,y)$ satisfies a matrix partial differential equation
$$\textnormal{div}(\varphi\,w(x,y)) = \delta\,w(x,y),$$
but the $2\times 2$ matrix $\varphi$ is not symmetric in general, and some of its entries
can be rational functions.

The main goal of this paper is to transform the above equation into a matrix Pearson equation
symmetrizing the matrix $\varphi$ in order to obtain a symmetric matrix $\Phi$, in such a way
that all the entries will be polynomials with the lowest possible degree.

\bigskip

The structure of this work is as follows. Section 2 presents some basic background about orthogonal
polynomials in two variables. Section 3 is focused on the basic background  on semiclassical and classical orthogonal polynomials in two variables. Koornwinder's method for
constructing systems of orthogonal polynomials in two variables as well as the construction of
semiclassical orthogonal polynomials in two variables with
Koornwinder's method are described in Section 4. In Section 5 we analyze two methods for finding
Pearson equations
for semiclassical and classical Koornwinder weights, and finally in Section 6 we provide examples
of these two methods and write second order linear partial differential operators associated with
semiclassical Koornwinder polynomials.

\bigskip


\section{Orthogonal polynomials in two variables}\label{section5}

Some background on orthogonal polynomials in two variables is introduced in this section for its use
throughout this work. We follow mainly \cite{DX2014}.

For $n\ge 0$, let $\Pi_n$ denote the linear space of real polynomials in two variables of
total degree not
greater than $n$, where the total degree of a polynomial is the highest combined degree of its
monomial terms. Let $\Pi=\bigcup_{n\ge 0} \Pi_n$ denote the linear space of all bivariate real polynomials. Observe that
$$\dim \Pi_n = \binom{n+2}{n},$$
and, for $n\ge 0$, there exist $n+1$ bivariate independent polynomials of exact degree $n$.

\medskip

Let $\M_{h \times k}(\R)$ denote the linear space of real matrices of size $h\times k$ and $\M_h(\R)$ denotes the space of real square matrices. Given a matrix $M\in \M_{h \times k}(\R)$, we denote by $M^t$ its transpose, and if $h=k$, $\det (M)$ denotes its determinant, and we say that $M$ is {\it non--singular}
if $\det(M)\neq 0$. The linear spaces of polynomial matrices and polynomial square matrices will be denoted by $\M_{h \times k}(\Pi)$ and $\M_h(\Pi)$, respectively. The degree of a polynomial matrix is defined as the maximum of the degrees of its polynomial entries. In addition, let $I_n$ denote the identity matrix of order
$n$.

\bigskip

Let $\Omega\subseteq \mathbb{R}^2$ be a domain having a non--empty interior. Suppose that
$w(x,y)$ is a non--negative
and integrable function defined on $\Omega$ such that
$$
\iint_{\Omega}w(x,y)dxdy>0,
$$
and the moments
$$
\mu_{h,k}=\iint_{\Omega} x^h\,y^k\,w(x,y)\,dxdy,
$$
are finite for all $h,k\ge0$, whether $\Omega$ is bounded or unbounded. Then $w(x,y)$ is said to be a
weight function over $\Omega$.

In this way, we can define the inner product
$$\langle p, q\rangle = \iint_{\Omega} p(x,y)\,q(x,y)\,w(x,y)\,dxdy,
$$
for all $p, q \in \Pi$. We say that $p\in \Pi_n$ is an {\it orthogonal polynomial with respect to $w(x,y)$} if
$$
\langle p, q\rangle \equiv 0, \qquad \forall q\in\Pi_{n-1}.
$$
Following \cite{DX2014}, let  denote by $\mathcal{V}_n$ the space of orthogonal polynomials of
degree exact $n$, that is,
$$\mathcal{V}_n = \{p\in \Pi_n: \langle p,q\rangle =0, \quad \forall q\in\Pi_{n-1}\}.
$$
Obviously, for $n\ge 1$, $\dim \mathcal{V}_n = n+1$, and we will denote an orthogonal basis of
$\mathcal{V}_n$ as $P_{n,k}(x,y), \,\, 0\le k\le n$. Observe that $\{P_{n,k}(x,y): 0\le k\le n,
\, n\ge 0\}$ is a sequence of independent bivariate polynomials
such that
\begin{itemize}
\item $\deg P_{n,k} = n,\quad n\ge 0, \quad 0\le k \le n$
\item $\langle P_{n,k}, P_{m,j}\rangle = K_{n,k}\,\delta_{n,m}\,\delta_{k,j}, \quad K_{n,k}> 0$.
\end{itemize}
Then, we will call it a {\it sequence of bivariate orthogonal polynomials} associated with the
weight function $w(x,y)$.

\bigskip

Suppose that $f:\mathbb{R}^2\rightarrow\mathbb{R}$ and $\mathbf{F}:\mathbb{R}^2\rightarrow\mathbb{R}^2$. In this work, the {\it gradient operator}
$$\nabla f(x,y)=(\partial_x f,\,\partial_y f)^t,$$
and the {\it divergence operator}
$$\textnormal{div}\,\mathbf{F}(x,y)=\nabla \cdot \mathbf{F},$$
will be used as the standard differential operators in two variables.

\bigskip


\section{Classical and semiclassical weight functions in two variables}\label{semi-section}

The contents of this section are dedicated to recall the definition of the classical and
semiclassical character for weight functions in two variables (\cite{AdMFPn07, AdMFPn08}).

\begin{defi}
Let $w(x,y)$ be a bivariate weight function defined over the domain $\Omega$. Then $w(x,y)$ is said
to be {\it semiclassical} if there exist a non--zero symmetric polynomial matrix and a non--zero
polynomial vector
\begin{equation}\label{phipsi}
\Phi=\begin{pmatrix}
\phi_{1,1} & \phi_{1,2} \\
\phi_{1,2} & \phi_{2,2}
\end{pmatrix}\in\mathcal{M}_{2\times2}(\Pi), \quad
\Psi=\begin{pmatrix}
\psi_1 \\
\psi_2
\end{pmatrix}\in\mathcal{M}_{2\times1}(\Pi),
\end{equation}
with $\textnormal{deg}\,\Phi\ge0$ and $\textnormal{deg}\,\Psi\ge1$, such that $\det\langle 1,\Phi
\rangle \ne 0$ and $w(x,y)$ satisfies the matrix Pearson equation
\begin{equation}\label{pearsontypeq1}
\textnormal{div}(\Phi w)= \Psi^t\, w,
\end{equation}
and the boundary conditions
\begin{equation}\label{boundaryconditions}
\begin{array}{l}
 \displaystyle{\int_{\partial \Omega}} p(x,y)\,w(x,y)\,(\phi_{1,1}(x,y)dy-\phi_{1,2}(x,y)dx)=0\\
 ~~\\
\displaystyle{\int_{\partial \Omega}} p(x,y)\,w(x,y)\,(\phi_{1,2}(x,y)dy-\phi_{2,2}(x,y)dx)=0,
\end{array}
\end{equation}
must hold for every polynomial $p(x,y)$. Moreover, we define
\begin{equation}\label{ese}
s = \max \{\deg(\Phi)-2,\deg(\Psi)-1\}.
\end{equation}
\end{defi}

\begin{remark}
The matrix Pearson equation for a given weight function is not unique. In fact,
 \eqref{pearsontypeq1} can be left multiplied times another $2\times 2$ non--singular
polynomial matrix, and we obtain a new matrix Pearson equation for $w(x,y)$.

The problem of characterize the minimum matrix Pearson equation for a semiclassical weight
function remains open.
\end{remark}

\medskip

From the definition of semiclassical weight function, we can consider a {\it classical} one as a particular case.

\begin{defi}
A weight function $w(x,y)$ defined on a domain $\Omega \subseteq \mathbb{R}^2$ is called {\it classical} if
it is semiclassical with $\deg \Phi\le 2$ and $\deg \Psi =1$. Using definition \eqref{ese}, we get that,
for classical weights, $s=0$.
\end{defi}

Classical and semiclassical weight functions are characterized by difference--differential properties
for the associated orthogonal polynomial sequences. Using the matrices defined in \eqref{phipsi},
we introduce the partial differential operator
$$\mathcal{L}[\cdot]:= \phi_{1,1}\,\partial_{xx} + 2\phi_{1,2}\,\partial_{xy} + \phi_{2,2}\,\partial_{yy}
+ \psi_{1}\,\partial_x + \psi_2\,\partial_y.
$$
We recall the next characterization for semiclassical weight functions.

\begin{teor}[\cite{AdMFPn08}]
Let $\{P_{n,k}(x,y): 0\le k\le n, \quad n\ge0\}$ be an orthogonal polynomial sequence associated with a
weight function $w(x,y)$. Then $w(x,y)$ is semiclassical, that is, it satisfies a Pearson equation
(\ref{pearsontypeq1}) if and only if for each $n\ge0, \, 0\le k\le n$, $P_{n,k}(x,y)$ satisfies the
second order difference--differential relation
\begin{equation}\label{diffrel2d}
\mathcal{L}[P_{n,k}(x,y)] = \sum_{m=n-s}^{n+s}\sum_{i=0}^m \lambda^{n,k}_{m,i} \,P_{m,i}(x,y),
\end{equation}
where $\lambda^{n,k}_{m,i}\in \mathbb{R}$.
\end{teor}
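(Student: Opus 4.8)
The plan is to prove the equivalence by reducing both implications to a single integration-by-parts identity, separating the information encoded in the degrees of $\Phi,\Psi$ from the information encoded in the boundary conditions \eqref{boundaryconditions}.

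\textbf{Forward direction.} First I would record an algebraic identity, valid pointwise and \emph{without} yet invoking the Pearson equation. Writing $\nabla=(\partial_x,\partial_y)^t$ and using the symmetry $\phi_{1,2}=\phi_{2,1}$, the product rule gives for every polynomial $p$
$$\textnormal{div}\big(w\,\Phi\,\nabla p\big)=w\,\mathcal{L}[p]+\nabla p\cdot\big(\textnormal{div}(\Phi\,w)-\Psi\,w\big),$$
where $\textnormal{div}(\Phi\,w)$ denotes the vector of row divergences of $\Phi\,w$. When $w$ satisfies the matrix Pearson equation \eqref{pearsontypeq1} the last term vanishes, leaving the key identity $w\,\mathcal{L}[p]=\textnormal{div}(w\,\Phi\,\nabla p)$. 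Multiplying by a polynomial $q$, integrating over $\Omega$, and applying Green's formula turns $\iint_\Omega q\,\textnormal{div}(w\,\Phi\,\nabla p)$ into a boundary flux minus $\iint_\Omega (\nabla q)^t\Phi(\nabla p)\,w$. The boundary flux splits as $\int_{\partial\Omega}(q\,\partial_x p)\,w(\phi_{1,1}dy-\phi_{1,2}dx)+\int_{\partial\Omega}(q\,\partial_y p)\,w(\phi_{1,2}dy-\phi_{2,2}dx)$, which vanishes upon applying the boundary conditions \eqref{boundaryconditions} to the polynomials $q\,\partial_x p$ and $q\,\partial_y p$. Hence $\langle\mathcal{L}[p],q\rangle=-\iint_\Omega (\nabla q)^t\Phi(\nabla p)\,w$, and the symmetry of $\Phi$ makes the right-hand side symmetric in $p$ and $q$, so $\mathcal{L}$ is symmetric: $\langle\mathcal{L}[p],q\rangle=\langle p,\mathcal{L}[q]\rangle$ for all $p,q\in\Pi$.

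\textbf{Degree count.} From \eqref{ese} we have $\deg\Phi\le s+2$ and $\deg\Psi\le s+1$, so $\mathcal{L}$ raises total degree by at most $s$, i.e. $\mathcal{L}[P_{n,k}]\in\Pi_{n+s}$. Expanding in the orthogonal basis, $\mathcal{L}[P_{n,k}]=\sum_{m=0}^{n+s}\sum_{i=0}^m\lambda^{n,k}_{m,i}P_{m,i}$ with $\lambda^{n,k}_{m,i}=\langle\mathcal{L}[P_{n,k}],P_{m,i}\rangle/K_{m,i}$. Using symmetry, $\langle\mathcal{L}[P_{n,k}],P_{m,i}\rangle=\langle P_{n,k},\mathcal{L}[P_{m,i}]\rangle$, and since $\mathcal{L}[P_{m,i}]\in\Pi_{m+s}$ this pairing vanishes whenever $m+s<n$. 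Thus the inner sum starts at $m=n-s$, which is exactly \eqref{diffrel2d}.

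\textbf{Converse and the main obstacle.} For the reverse implication I would assume \eqref{diffrel2d} for every $P_{n,k}$; reading off coefficients, this is equivalent to $\mathcal{L}[P_{n,k}]\perp\Pi_{n-s-1}$ for all $n,k$. The strategy is to run the integration by parts backwards: the identity of the forward part, integrated against $q$ and simplified by the boundary conditions, reads $\langle\mathcal{L}[p],q\rangle+\iint_\Omega q\,\nabla p\cdot\big(\textnormal{div}(\Phi w)-\Psi w\big)=-\iint_\Omega (\nabla q)^t\Phi(\nabla p)\,w$, whose right-hand side is symmetric in $p,q$. Feeding the assumed orthogonality relations into this identity should force the defect vector $\textnormal{div}(\Phi w)-\Psi w$ to pair to zero against a spanning family of test functions, whence it vanishes and \eqref{pearsontypeq1} holds. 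The hard part is precisely this last step: the banded relation \eqref{diffrel2d} by itself does \emph{not} make $\mathcal{L}$ symmetric (that would require an extra constraint $\lambda^{n,k}_{m,i}K_{m,i}=\lambda^{m,i}_{n,k}K_{n,k}$), so one must exploit the full graded family of orthogonality conditions together with the boundary conditions and the finiteness of all moments to conclude that the vector defect is identically zero. This technical core is the one carried out in \cite{AdMFPn08}, to which the argument reduces.
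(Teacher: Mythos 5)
The paper does not prove this theorem; it is quoted from \cite{AdMFPn08}, so there is no in-paper argument to measure your proposal against. On its own terms, your forward direction is correct and complete: the pointwise identity $\textnormal{div}(w\,\Phi\,\nabla p)=w\,\mathcal{L}[p]+\nabla p\cdot(\textnormal{div}(\Phi w)^t-\Psi w)$ checks out (the symmetry $\phi_{1,2}=\phi_{2,1}$ is exactly what merges the two mixed terms into $2\phi_{1,2}\partial_{xy}$), the boundary flux does split into the two line integrals of \eqref{boundaryconditions} applied to $q\,\partial_x p$ and $q\,\partial_y p$, and the resulting symmetry $\langle\mathcal{L}[p],q\rangle=\langle p,\mathcal{L}[q]\rangle$ combined with the degree bound $\mathcal{L}[\Pi_n]\subset\Pi_{n+s}$ (which follows from \eqref{ese}) gives precisely the band $n-s\le m\le n+s$ in \eqref{diffrel2d}. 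This is the standard and correct route for that implication.

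The converse, however, is a plan rather than a proof, and the gap you flag is real: the band structure \eqref{diffrel2d} constrains only which coefficients may be nonzero and does not by itself yield the reciprocity $\lambda^{n,k}_{m,i}K_{m,i}=\lambda^{m,i}_{n,k}K_{n,k}$, i.e.\ the symmetry of $\mathcal{L}$, which is what your backwards integration by parts would need. You state that feeding the orthogonality relations into the identity ``should force'' the defect $\textnormal{div}(\Phi w)-\Psi^t w$ to pair to zero against a spanning family of test functions, but you never exhibit that family or the pairing argument; this is the entire content of the reverse implication and you defer it to \cite{AdMFPn08}. There is also a logical wrinkle you pass over: the boundary conditions \eqref{boundaryconditions} are part of the \emph{definition} of semiclassical (they involve the very $\Phi$ you are trying to certify), so in the converse direction you cannot freely invoke them to kill the boundary flux --- they must either be hypothesized separately or established as part of the argument. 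As a self-contained proof, the proposal therefore establishes only the ``only if'' half.
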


Observe that if $w(x,y)$ is classical, then $s=0$, and in the above difference--differential
relation the double sum in the right hand side reduces to a sum of orthogonal polynomials
in $\mathcal{V}_n$. This characterization for bivariate classical orthogonal polynomials was shown in \cite{FPP05a} and it can be reformulated in the following way.

\begin{teor}[\cite{FPP05a}]\label{teor-clas} In the above conditions, $w(x,y)$ is classical if and only if
there exists constants $\lambda^{n,k}_{i}\in \mathbb{R}$ such that
\begin{equation*}
\mathcal{L}[P_{n,k}(x,y)] = \sum_{i=0}^n \lambda^{n,k}_{i} \,P_{n,i}(x,y).
\end{equation*}

\end{teor}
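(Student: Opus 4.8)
The plan is to reduce both implications to two structural facts about $\mathcal{L}$ in the classical situation: that it preserves total degree, and that it is symmetric for the inner product $\langle\cdot,\cdot\rangle$. The engine behind the symmetry is the divergence identity
\[
\textnormal{div}(w\,\Phi\,\nabla p) = w\,\mathcal{L}[p], \qquad p\in\Pi,
\]
which I would first verify by expanding the left--hand side with the product rule and substituting the two scalar components of the Pearson equation \eqref{pearsontypeq1} to cancel every first--order term carrying a derivative of $w$. With this identity in hand, Green's formula gives
\[
\langle\mathcal{L}[p],q\rangle = \iint_\Omega q\,\textnormal{div}(w\,\Phi\,\nabla p)\,dxdy = \oint_{\partial\Omega} q\,w\,(\Phi\nabla p)\cdot\mathbf{n}\,ds - \iint_\Omega (\nabla q)^t\,\Phi\,(\nabla p)\,w\,dxdy.
\]
The boundary integral splits exactly into the two expressions appearing in \eqref{boundaryconditions}, evaluated at the polynomials $q\,\partial_x p$ and $q\,\partial_y p$, so it vanishes; and because $\Phi$ is symmetric the remaining bilinear form $(\nabla q)^t\Phi(\nabla p)$ is symmetric in $p$ and $q$. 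Hence $\langle\mathcal{L}[p],q\rangle=\langle p,\mathcal{L}[q]\rangle$ for all $p,q\in\Pi$.

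For the direct implication I would use that $\deg\Phi\le2$ and $\deg\Psi=1$ force $\mathcal{L}$ to map $\Pi_n$ into $\Pi_n$: each second--order term multiplies a degree--$\le2$ coefficient by a polynomial whose degree has dropped by two, and each first--order term multiplies a degree--$\le1$ coefficient by a polynomial whose degree has dropped by one. Therefore $\mathcal{L}[P_{n,k}]\in\Pi_n$, and we may expand it in the orthogonal basis as $\mathcal{L}[P_{n,k}] = \sum_{m=0}^{n}\sum_{i=0}^{m} c^{n,k}_{m,i}\,P_{m,i}$. For any $m<n$, symmetry together with degree preservation gives $c^{n,k}_{m,i}\,K_{m,i} = \langle\mathcal{L}[P_{n,k}],P_{m,i}\rangle = \langle P_{n,k},\mathcal{L}[P_{m,i}]\rangle = 0$, since $\mathcal{L}[P_{m,i}]\in\Pi_m\subseteq\Pi_{n-1}$ is orthogonal to $P_{n,k}\in\mathcal{V}_n$. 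Thus only the block $m=n$ survives, which is exactly the asserted relation with $\lambda^{n,k}_i:=c^{n,k}_{n,i}$.

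For the converse I would observe that the stated relation is precisely the instance $s=0$ of the difference--differential relation \eqref{diffrel2d}, the double sum $\sum_{m=n-s}^{n+s}$ collapsing to the single block $m=n$ exactly when $s=0$. Invoking the ``if'' part of the semiclassical characterization (the preceding theorem of \cite{AdMFPn08}), applied to the operator $\mathcal{L}$ built from the given $\Phi$ and $\Psi$, then yields that $w$ satisfies the Pearson equation \eqref{pearsontypeq1} with these coefficients; since $\deg\Phi\le2$ and $\deg\Psi=1$, definition \eqref{ese} gives $s=0$ and $w$ is classical. The step I expect to be the real obstacle is this reconstruction of the Pearson equation from the eigenrelation, i.e.\ running the integration by parts backwards: one must pass from the family of scalar identities $\langle\mathcal{L}[P_{n,k}],P_{m,i}\rangle=0$ for $m\neq n$ to the pointwise vector equation $\textnormal{div}(\Phi w)=\Psi^t w$. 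A self--contained route is to test $\langle\mathcal{L}[p],q\rangle-\langle p,\mathcal{L}[q]\rangle$ against all polynomials, show it vanishes by the collapsed relation, and then use density of polynomials together with the boundary conditions \eqref{boundaryconditions} to force the Pearson equation; carefully controlling the boundary terms in this direction is the delicate point.
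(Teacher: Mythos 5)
The paper itself gives no proof of this theorem: it is quoted from \cite{FPP05a} and justified in one sentence as the $s=0$ specialization of the preceding semiclassical characterization from \cite{AdMFPn08} (classical $\Leftrightarrow$ semiclassical with $s=0$ $\Leftrightarrow$ the double sum in \eqref{diffrel2d} collapses to the single block $m=n$). Your converse follows exactly that route, so it agrees with the paper. Your direct implication is a genuinely self-contained alternative: the identity $\textnormal{div}(w\,\Phi\,\nabla p)=w\,\mathcal{L}[p]$ is correct (the Pearson equation cancels precisely the terms carrying derivatives of $w$), the boundary term does split into the two integrals of \eqref{boundaryconditions} evaluated at $q\,\partial_x p$ and $q\,\partial_y p$, the symmetry of $\langle\mathcal{L}[p],q\rangle=-\iint_\Omega(\nabla q)^t\Phi(\nabla p)\,w$ is exactly where the symmetry of $\Phi$ enters, and the degree bounds $\deg\Phi\le2$, $\deg\Psi=1$ give $\mathcal{L}[\Pi_n]\subseteq\Pi_n$, so the lower blocks die by orthogonality. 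This buys transparency --- it isolates the role of the symmetry of $\Phi$ and of the boundary conditions --- at the cost of redoing work already contained in the cited theorem.

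One caution about the ``self-contained route'' you sketch for the converse: the collapsed relation does \emph{not} by itself make $\mathcal{L}$ symmetric. It gives $\langle\mathcal{L}[P_{n,k}],P_{n,j}\rangle=\lambda^{n,k}_j K_{n,j}$ versus $\langle P_{n,k},\mathcal{L}[P_{n,j}]\rangle=\lambda^{n,j}_k K_{n,k}$, and these need not coincide, so ``show $\langle\mathcal{L}[p],q\rangle-\langle p,\mathcal{L}[q]\rangle$ vanishes by the collapsed relation'' fails as stated. The reconstruction of the pointwise Pearson equation from the eigenrelation genuinely requires the machinery of \cite{AdMFPn08} and \cite{FPP05a} (moment-functional or matrix recurrence arguments), which your primary line correctly invokes as a black box --- exactly as the paper does. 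A small addition that would make the converse airtight: the degree bounds $\deg\Phi\le2$ and $\deg\Psi\le1$ needed for ``classical'' follow automatically from $\mathcal{L}[\mathcal{V}_n]\subseteq\mathcal{V}_n$ by applying $\mathcal{L}$ to $1$, $x$, $y$, $x^2$, $xy$, $y^2$, so they need not be postulated separately.
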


\begin{remark} We must remark that in the classical case, the whole linear space of orthogonal
polynomials of exact degree $n$ is preserved by $\mathcal{L}$.
In other words,
\begin{equation*}
\mathcal{L}[\mathcal{V}_n] \subset \mathcal{V}_n.
\end{equation*}
\end{remark}

\noindent
In the particular case when $\lambda^{n,k}_{i}=\delta_{k,i} \lambda_{n,k}$, every polynomial of the above sequence 
is an eigenfunction of $\mathcal{L}$
$$
\mathcal{L}[P_{n,k}] = \lambda_{n,k}\,P_{n,k},
$$
that is, every orthogonal polynomial satisfies a second order linear partial differential equation.
Theorem \ref{teor-clas} is an extension of the Krall and Sheffer's definition of classical
orthogonal polynomials in two variables (\cite{KS67}). In that case,
$\lambda^n_k \equiv \lambda_n$ is independent of $k$, and then every orthogonal polynomial of
total degree $n$ satisfies the same second order linear partial differential equation.


\bigskip

\section{Two variable semiclassical Koornwinder weights}

First, we describe the method introduced by T. H. Koornwinder in 1975 (see \cite{DX2014,Koor75}),
to construct weight functions in two variables from two
weight functions in one variable.

\medskip

Let $w_1(x)$ and $w_2(y)$ be univariate weight functions defined on the intervals $(a,b)$ and $(c,d)$,
respectively. Let $\rho(x)$ be a positive function on $(a,b)$ satisfying one of the following two conditions

\medskip

\leftline{
\begin{tabular}{lp{0.85\textwidth}}
\emph{Case I}: & $\rho(x)$ is a polynomial of degree $\le 1$, that is, $\rho(x) = r_1\,x+r_0$,
with $|r_1|+|r_0|>0$,\\
& \\
\emph{Case II}: & $\rho(x)$ is the square root of a non--negative polynomial of degree at most 2,
$c=-d < 0$, and $w_2(y)$ is an even function on $(-d, d)$.
\end{tabular}}

\bigskip

Anyway, $\rho(x)^2$
is a polynomial of degree less than or equal to 2, and from now on,
we will denote
\begin{equation*}
\rho(x)^2 = a_2\, x^2 + a_1\, x + a_0,
\end{equation*}
where $a_2, a_1, a_0 \in \mathbb{R}$ and $|a_2|+|a_1| + |a_0|>0$. Observe that, in
the first case $a_2 = r_1^2\ge 0$, $a_1 = 2\,r_1\,r_0$, and $a_0 = r_0^2\ge 0$.

\bigskip

For $m\ge 0$, let $\{p_{n}(x;m)\}_{n\geqslant0}$ be the monic orthogonal polynomial sequence
with respect to the weight function $\rho(x)^{2m+1}\,w_1(x)$ and let $\{q_n(y)\}_{n\geqslant0}$
be the monic orthogonal polynomial sequence with respect to the weight function $w_2(y)$.
Then, we define the monic bivariate {\it Koornwinder polynomials}
\begin{equation}\label{kops}
P_{n,m}(x,y) = p_{n-m}(x;m)\,\rho(x)^m\, q_m\left(\frac{y}{\rho(x)}\right), \quad 0 \leqslant m\leqslant n.
\end{equation}
Notice that we get a polynomial of total degree $n$ and degree $m$ in $y$. Moreover, they are orthogonal with respect to the {\it Koornwinder weight function}
\begin{equation}\label{Koorw}
w(x,y)=w_1(x)\,w_2\left(\frac{y}{\rho(x)}\right),
\end{equation}
over the domain
\begin{equation}\label{domain}
\Omega = \{(x,y)\in \mathbb{R}^2,\quad  a<x<b, \quad c\,\rho(x) < y < d\,\rho(x)\}.
\end{equation}

\medskip

Observe that the \emph{tensor product} of two monic orthogonal polynomials in one
variable
$$P_{n,m}(x,y) = p_{n-m}(x)\,q_m(y), \quad 0 \leqslant m\leqslant n,
$$
corresponds to monic Koornwinder orthogonal polynomials with respect to the weight function $w(x,y) = w_1(x)\,w_2(y)$ where $\rho(x)=1$.


\bigskip

Now, we will recover a Theorem in \cite{FPP12} where it was proved that the semiclassical character is
inherited by bivariate Koornwinder polynomials

\begin{teor}[\cite{FPP12}]
Let $w_1$ and $w_2$ be two semiclassical weight functions in one variable. Then, the bivariate
Koornwinder weight \eqref{Koorw} is semiclassical.
\end{teor}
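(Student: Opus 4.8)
The plan is to push the two univariate Pearson equations through Koornwinder's change of variables $t=y/\rho(x)$ and assemble the result into a single matrix first--order relation for $\nabla w$, which I then rewrite in divergence form. Since $w_1$ and $w_2$ are semiclassical, there are univariate polynomials $\phi_1,\psi_1$ and $\phi_2,\psi_2$ with $(\phi_1 w_1)'=\psi_1 w_1$ and $(\phi_2 w_2)'=\psi_2 w_2$, equivalently $\phi_1 w_1'=(\psi_1-\phi_1')w_1$ and $\phi_2 w_2'=(\psi_2-\phi_2')w_2$, together with the boundary conditions \eqref{Uni-boundary}. Writing $w(x,y)=w_1(x)\,w_2(t)$ with $t=y/\rho(x)$, the goal is to produce a polynomial matrix and a polynomial vector satisfying the matrix Pearson equation \eqref{pearsontypeq1} and the boundary conditions \eqref{boundaryconditions}.

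First I would compute $\nabla w$ by the chain rule. From $\partial_x t=-y\rho'/\rho^2$ and $\partial_y t=1/\rho$ one gets
\[
\partial_y w=\frac{1}{\rho}\,w_1\,w_2'(t),\qquad \partial_x w=w_1'\,w_2(t)-\frac{y\rho'}{\rho}\,\partial_y w .
\]
Substituting the univariate Pearson relations yields two fundamental identities carrying $w$ on the right. Multiplying the first by $\rho\,\phi_2(t)$ gives $\rho\,\phi_2(t)\,\partial_y w=(\psi_2-\phi_2')(t)\,w$; multiplying the second by $\phi_1$ and using $w_1 w_2'(t)=\rho\,\partial_y w$ gives $\phi_1\,\partial_x w+\phi_1\,\dfrac{y\rho'}{\rho}\,\partial_y w=(\psi_1-\phi_1')\,w$. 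The only non--polynomial ingredients are the powers of $\rho$ and the compositions $\phi_2(t),\psi_2(t)$. The crucial observation is that $\rho\rho'=\tfrac12(\rho^2)'=a_2x+\tfrac12 a_1$ is a polynomial of degree $\le 1$, so that $y\rho'/\rho=y\,(\rho\rho')/\rho^2$ has the genuine polynomial $\rho^2$ in its denominator.

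Next I would clear all denominators. Multiplying the second identity by $\rho^2$ turns its coefficients into the polynomials $\rho^2\phi_1$, $y(\rho\rho')\phi_1$ and $\rho^2(\psi_1-\phi_1')$. For the first identity I would multiply by a suitable power $\rho^{m}$: in Case~I this is immediate since $\rho$ is itself a polynomial, while in Case~II one invokes that $w_2$ is even, which fixes the parities of $\phi_2$ and $\psi_2$ so that $\phi_2(t)$ and $(\psi_2-\phi_2')(t)$ involve only those powers of $t=y/\rho$ whose accompanying powers of $\rho$ are even, hence expressible through $\rho^2$. The two cleared identities then read $\varphi\,\nabla w=G\,w$ for an explicit \emph{triangular} polynomial matrix $\varphi=\left(\begin{smallmatrix}\rho^2\phi_1 & y(\rho\rho')\phi_1\\ 0 & \rho^{m}\phi_2(t)\end{smallmatrix}\right)$ (its $(1,2)$ entry is nonzero, so $\varphi$ is \emph{not} symmetric in general) and a polynomial vector $G$. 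Using $\textnormal{div}(\varphi w)=\varphi\,\nabla w+(\textnormal{div}\,\varphi)\,w$, I absorb the lower--order term into the right--hand side to obtain $\textnormal{div}(\varphi w)=\delta\,w$ with $\delta$ polynomial.

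Finally I would verify the two structural requirements. The condition $\det\langle 1,\varphi\rangle\neq0$ holds because $\varphi$ is triangular, so the determinant factors as $\langle 1,\rho^2\phi_1\rangle\,\langle 1,\rho^{m}\phi_2(t)\rangle$, each factor being an integral of $w>0$ against a function that is sign--definite on $\Omega$ for the semiclassical data at hand. The boundary conditions \eqref{boundaryconditions} reduce, via the product structure of $w$ and the description $\partial\Omega=\{x=a\}\cup\{x=b\}\cup\{y=c\rho\}\cup\{y=d\rho\}$, to \eqref{Uni-boundary} for $w_1$ on the vertical sides and for $w_2$ at $t=c,d$ on the curved sides. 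I expect the main obstacle to be the polynomiality step in Case~II: keeping track of the half--integer powers of $\rho^2$ and showing that the evenness of $w_2$ forces them to cancel. A secondary point is that the matrix $\varphi$ obtained here is not symmetric, so to meet the definition literally one must still symmetrize it; at the level of the equation this amounts to left--multiplying by a suitable non--singular polynomial matrix $N$, since $\textnormal{div}(N\varphi\,w)=N\varphi\,\nabla w+\textnormal{div}(N\varphi)\,w=[\,NG+\textnormal{div}(N\varphi)\,]\,w$ is again a matrix Pearson equation, as in the Remark, and this symmetrization is precisely the content developed in the later sections.
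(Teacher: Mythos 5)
Your computation reproduces exactly the derivation the paper presents right after the theorem: the chain rule applied to $w=w_1(x)\,w_2(y/\rho(x))$ combined with the two univariate Pearson equations gives the triangular first--order system \eqref{first}--\eqref{second}, i.e.\ \eqref{eq30}, and your devices for clearing denominators — that $\rho\rho'=\tfrac12(\rho^2)'$ is a polynomial of degree $\le1$, and that in Case II the parity of $\phi_2,\psi_2$ leaves only even powers of $\rho$ — are the right ones. Up to that point the proposal and the paper coincide.

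The gap is the final step. Definition 1 requires a \emph{symmetric} polynomial matrix $\Phi$ with $\det\langle1,\Phi\rangle\ne0$, a polynomial vector $\Psi$ with $\deg\Psi\ge1$, and the boundary conditions \eqref{boundaryconditions} stated for the entries of \emph{that} $\Phi$. You stop at the non--symmetric triangular $\varphi$ and assert that symmetrization "amounts to left--multiplying by a suitable non--singular polynomial matrix $N$", deferring its construction. The paper flags precisely this as the non--trivial issue: \eqref{eq30} "is not a matrix Pearson equation \dots since the coefficient matrix $\varphi$ is not a symmetric matrix in general and it is not guaranteed that its entries are polynomials", and producing a usable symmetric polynomial $\Phi$ is the content of Sections 5--6, not a formality. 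To close the argument you must at least (i) exhibit one non--singular polynomial symmetrizer — e.g.\ the constraint $A\varphi_2+B\varphi_3-C\varphi_1=0$ coming from \eqref{sym} is solved by $A=\varphi_1$, $B=0$, $C=\varphi_2$, $D=1$, giving the symmetric polynomial matrix $\Phi=\left(\begin{smallmatrix}\varphi_1^2&\varphi_1\varphi_2\\ \varphi_1\varphi_2&\varphi_2^2+\varphi_3\end{smallmatrix}\right)$ — and then (ii) verify $\det\langle1,\Phi\rangle\ne0$, $\deg\Psi\ge1$ and the boundary conditions for that $\Phi$, not for $\varphi$. Your check of $\det\langle1,\cdot\rangle\ne0$ relies on the triangular factorization, which does not survive symmetrization, and it also assumes a sign--definiteness of $\phi_1$ on $(a,b)$ that a general semiclassical $w_1$ need not have. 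A smaller imprecision: evenness of $w_2$ does not by itself fix the parities of $\phi_2,\psi_2$, since the Pearson pair is not unique; one must first replace the given pair by the symmetrized pair $\phi_2(y)+\phi_2(-y)$, $\psi_2(y)-\psi_2(-y)$ and check that it is non--trivial.
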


The proof of above Theorem does not provides an standard method to find a minimal (in the sense
of the degrees of the coefficients) matrix Pearson equation for Koornwinder weights. This is the
main objective from now on.

Let $w_1(x)$ and $w_2(x)$ be two semiclassical weight functions in one variable defined on $(a,b)$ and
$(c,d)$, respectively, and let
$$
\frac{d}{dx}(\phi_i(x)\,w_i(x)) = \psi_i(x)\,w_i(x), \quad i=1,2,
$$
its respective Pearson equations, where $\deg\,\phi_i = p_i \ge 0$ and $\deg\,\psi_i = q_i \ge1$.
The Pearson equations are equivalent to
$$
\phi_i(x) w'_i(x) = \widetilde{\psi}_i(x) w_i(x),
$$
where $\widetilde{\psi}_i(x) = \psi_i(x) - \phi'_i(x), \, i=1,2$.

Taking partial derivatives on \eqref{Koorw}, we get
\begin{align*}
&\frac{\partial}{\partial x}w(x,y) = w'_1(x)\,w_2\left(\frac{y}{\rho(x)}\right) - w_1(x)\,w'_2\left(\frac{y}{\rho(x)}\right)\frac{y}{\rho(x)^2}\,\rho'(x),\\
\\
&\frac{\partial}{\partial y}w(x,y) = w_1(x)\,w'_2\left(\frac{y}{\rho(x)}\right)\frac{1}{\rho(x)}.
\end{align*}
Then, substituting the second equation into the first one and using the Pearson equations for
$w_1$ and $w_2$, we deduce
\begin{align}
&\phi_1(x)\frac{\partial}{\partial x} w(x,y) + \phi_1(x) \frac{\rho'(x)}{\rho(x)}\,y\,
\frac{\partial}{\partial y} w(x,y) = \widetilde{\psi}_1(x)\,w(x,y),\label{first}\\
\nonumber \\
&\phi_2\left(\frac{y}{\rho(x)} \right) \frac{\partial}{\partial y} w(x,y) = \frac{1}{\rho(x)}
\widetilde{\psi}_2\left(\frac{y}{\rho(x)}\right)\,w(x,y).\label{second}
\end{align}

If we define the following matrices
\begin{equation*}
\varphi =\begin{pmatrix}
\varphi_1 & \varphi_2\\
\\
0 & \varphi_3
\end{pmatrix}= \begin{pmatrix}
\phi_1(x) & \eta(x)\,y\\
\\
0 & \rho(x)\phi_2\left(\dfrac{y}{\rho(x)} \right)
\end{pmatrix}, \quad \delta = \begin{pmatrix}
\delta_1 \\ \\  \delta_2
\end{pmatrix}= \begin{pmatrix}
\widetilde{\psi}_1(x) \\ \\  \widetilde{\psi}_2\left(\dfrac{y}{\rho(x)}\right)
\end{pmatrix},
\end{equation*}
equations \eqref{first}--\eqref{second} can be written as
\begin{equation}\label{eq30}
\varphi \,\nabla w = \delta \, w,
\end{equation}
where
\begin{equation*}
\eta(x)=\phi_1(x)\frac{\rho'(x)}{\rho(x)}.
\end{equation*}

\begin{remark} If both univariate weight functions are semiclassical and $\rho(x) =1$, Koornwinder construction
yields semiclassical weight functions in two variables since matrix $\varphi$ is diagonal. In particular,
tensor product of univariate classical weight functions provides a bivariate classical weight.
\end{remark}

We must remark that equation \eqref{eq30} is not a matrix Pearson equation for the Koornwinder
weight since the coefficient matrix $\varphi$ is not a symmetric matrix in general and it is not guaranteed that its entries are polynomials.

\bigskip

Observe that the determinant of $\varphi$ does not vanish on the interior of $\Omega$,
the domain of orthogonality for Koornwinder polynomials given in (\ref{domain}). In fact,
$$\det\,\varphi=\phi_1(x)\,\rho(x)\phi_2\left(\frac{y}{\rho(x)}\right)\neq 0,
$$
for all $(x,y)$ belonging to the interior of $\Omega$. Therefore, $\varphi$ is a non--singular
matrix on the interior  of $\Omega$ and we can solve (\ref{eq30}) obtaining
\begin{equation}\label{eq35}
\nabla\,w(x,y) = \varphi^{-1}\,\delta\,w(x,y),
\end{equation}
where
$$
 \varphi^{-1}\, \delta =\begin{pmatrix}
\dfrac{\widetilde{\psi}_1(x)}{\phi_1(x)}-\dfrac{\eta(x)y\widetilde{\psi}_2\left(\dfrac{y}{\rho(x)}
\right)}{\phi_1(x)\rho(x)\phi_2\left(\dfrac{y}{\rho(x)}\right)} \\
\\
\dfrac{\widetilde{\psi}_2\left(\dfrac{y}{\rho(x)}\right)}{\rho(x)\phi_2\left(\dfrac{y}{\rho(x)}\right)}
\end{pmatrix}.
$$
The entries of the column vector $\varphi^{-1}\delta$ are rational functions whose denominator can only
vanish on the set
\begin{equation}\label{set}
\left\{(x,y)\in\mathbb{R}^2:\,\det\,\varphi=\phi_1(x)\,\rho(x)\phi_2\left(\frac{y}{\rho(x)}\right)= 0\right\},
\end{equation}
and therefore, $w$ may only vanish or become infinite on \eqref{set}. Moreover, since $\nabla \ln w=\nabla w/w$, then the partial derivatives of all orders of $\ln w$ are rational functions whose denominators do not vanish outside  \eqref{set}. Hence, $\ln w$ is analytic, and consequently, so is $w$.

\bigskip

Now, we study when the rational function
$$\eta(x)=\phi_1(x)\,\frac{\rho'(x)}{\rho(x)},$$
is a polynomial. Notice that
$$
\frac{d}{dx}[\phi_1(x)\,\rho(x)^{2m+1}\,w_1(x)] =
\left[\psi_1(x)+(2m+1)\phi_1(x)\frac{\rho'(x)}{\rho(x)}\right]\rho(x)^{2m+1}\,w_1(x),
$$
that is, the weight function $u_m(x)=\rho(x)^{2m+1}\,w_1(x)$ satisfies
\begin{equation}\label{Pearson-m}
\frac{d}{dx}[\phi_m(x)\,u_m(x)] =
\psi_m(x)\,u_m(x),
\end{equation}
where
$$\phi_m(x) = \phi_1(x), \qquad \psi_m(x) = \psi_1(x)+(2m+1)\phi_1(x)\frac{\rho'(x)}{\rho(x)}.
$$
In order to have a Pearson equation for the weight function $u_m(x)$, we need that the coefficients
of \eqref{Pearson-m} to be polynomials.

\begin{prop}

If the weight function $w_1(x)$ is semiclassical of class $s_1$, then $u_m(x)$ is semiclassical of class
$c_m$, where
\begin{itemize}
\item \emph{Case I}: if $\rho(x)$ divides $\phi_1(x)$, then $c_m=s_1$, otherwise if $\rho(x)$ does not
divides $\phi_1(x)$, then $c_m=s_1+1$,

\item \emph{Case II}: if $\rho(x)^2$ divides $\phi_1(x)$, we get $c_m=s_1$, while
if $\rho(x)^2$ does not divides $\phi_1(x)$, then $s_1 + 1 \le c_m \le s_1+2$.

\end{itemize}
As a consequence, if $w_1(x)$ is classical and
\begin{itemize}
\item \emph{Case I}: $\rho(x)$ divides $\phi_1(x)$,

\item \emph{Case II}: $\rho(x)^2$ divides $\phi_1(x)$,

\end{itemize}
then $u_m(x)$ is classical, and the rational function $\eta(x)$ is a polynomial of degree $\le 1$.
\end{prop}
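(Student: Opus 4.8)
The plan is to produce an explicit \emph{polynomial} Pearson equation for $u_m$ in each case and then read off its class. The basic device is that if a weight $u$ satisfies $(\phi\,u)'=\psi\,u$ and $\pi$ is any polynomial, then $(\pi\,\phi\,u)'=(\pi\,\psi+\pi'\,\phi)\,u$; multiplying the coefficient $\phi_m=\phi_1$ by a suitable $\pi$ will clear the denominator $\rho$ hidden in $\psi_m$. In Case I I would take $\pi=\rho$ and in Case II $\pi=\rho^2$, obtaining the polynomial pairs
\[
\text{(I)}\quad \Phi=\rho\,\phi_1,\ \Psi=\rho\,\psi_1+(2m+2)r_1\,\phi_1,\qquad
\text{(II)}\quad \Phi=\rho^2\,\phi_1,\ \Psi=\rho^2\,\psi_1+\tfrac{2m+3}{2}(\rho^2)'\,\phi_1,
\]
which one checks directly against $u_m=\rho^{2m+1}w_1$ using $\tfrac{d}{dx}(\phi_1 w_1)=\psi_1 w_1$. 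When $\rho\mid\phi_1$ (Case I) or $\rho^2\mid\phi_1$ (Case II) the function $\eta$ is already a polynomial, so no multiplication is needed and one uses the pair $(\phi_1,\psi_m)$ directly.

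The upper bounds on $c_m$ come from counting degrees. Writing $p_1=\deg\phi_1$, $q_1=\deg\psi_1$, so that $s_1=\max\{p_1-2,q_1-1\}$, the pairs above give $\deg\Phi\le p_1+2$ and $\deg\Psi\le\max\{q_1+2,p_1+1\}$ in Case II, whence $\max\{\deg\Phi-2,\deg\Psi-1\}\le\max\{p_1,q_1+1\}=s_1+2$; the analogous count in Case I yields $s_1+1$. In the divisibility subcases one instead uses $(\phi_1,\psi_m)$, where $\deg\psi_m\le\max\{q_1,p_1-1\}$, so $\max\{\deg\phi_1-2,\deg\psi_m-1\}\le s_1$, giving $c_m\le s_1$.

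For the exact values and the lower bounds I would invoke Maroni's minimality criterion: a polynomial pair $(\phi,\psi)$ realizes the class precisely when $\gcd(\phi,\widetilde\psi)=1$, where $\widetilde\psi=\psi-\phi'$, since a reduction by a factor $(x-c)$ is available exactly when $c$ is a common zero of $\phi$ and $\widetilde\psi$. Minimality of the chosen pair for $w_1$ means $\gcd(\phi_1,\widetilde\psi_1)=1$. Computing $\widetilde\Psi=\Psi-\Phi'$ gives $\widetilde\Psi=\rho\,\widetilde\psi_1+(2m+1)r_1\phi_1$ in Case I and $\widetilde\Psi=\rho^2\widetilde\psi_1+\tfrac{2m+1}{2}(\rho^2)'\phi_1$ in Case II, and one evaluates these at the zeros of $\Phi$. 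At a zero of $\phi_1$ that is not a zero of $\rho$ the summand $\rho\,\widetilde\psi_1$ (resp. $\rho^2\widetilde\psi_1$) survives and is nonzero; at the zero of $\rho$ in Case I the hypothesis $\rho\nmid\phi_1$ forces $\phi_1\ne0$ there, so $\widetilde\Psi\ne0$ and no reduction occurs, giving $c_m=s_1+1$. When $\rho\mid\phi_1$ (resp. $\rho^2\mid\phi_1$) the factor $r_1$ or $(\rho^2)'$ vanishes at the relevant zeros, $\widetilde\Psi$ reduces there to a power of $\rho$ times $\widetilde\psi_1$ and stays nonzero by minimality of $w_1$, so $\gcd(\Phi,\widetilde\Psi)=1$ and $c_m=s_1$ exactly.

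The main obstacle is Case II without divisibility. There $\widetilde\Psi$ can vanish at a zero $c$ of $\rho^2$ only through the factor $(\rho^2)'$, which is nonzero at a \emph{simple} zero of $\rho^2$ but vanishes at a \emph{double} zero; moreover $\phi_1$ may share one of the two zeros of $\rho^2$. Depending on these configurations either no factor, one factor, or two factors $(x-c)$ can be cancelled from $(\Phi,\widetilde\Psi)$, which is exactly why one only obtains the range $s_1+1\le c_m\le s_1+2$ rather than a single value. Finally, for the stated consequence, $w_1$ classical means $s_1=0$, i.e. $p_1\le2$ and $q_1=1$; the upper bounds give $c_m\le0$, while every semiclassical class is $\ge0$ because $\deg\Psi\ge1$, so $c_m=0$ and $u_m$ is classical. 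Under the divisibility hypotheses $\eta=r_1\,g$ with $\phi_1=\rho\,g$ (Case I) or $\eta=\tfrac12 h\,(\rho^2)'$ with $\phi_1=\rho^2 h$ (Case II), and $p_1\le2$ forces $\deg g\le1$, $\deg h\le0$, so in both cases $\deg\eta\le1$. Throughout, the univariate boundary conditions for $u_m$ are inherited from those of $w_1$, since the extra powers of $\rho$ are absorbed into $\Phi$ and $\rho$ is controlled on $(a,b)$ relative to the decay of $\phi_1 w_1$.
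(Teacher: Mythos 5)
Your construction is exactly the one the paper uses: rewrite the Pearson equation for $u_m=\rho^{2m+1}w_1$ and, when $\rho$ (resp.\ $\rho^2$) does not divide $\phi_1$, multiply through by $\rho$ (resp.\ $\rho^2$) to clear the denominator in $\eta=\phi_1\rho'/\rho$. Your explicit pairs $(\Phi,\Psi)$ and the formula $\widetilde\Psi=\rho\,\widetilde\psi_1+(2m+1)r_1\phi_1$ (and its Case II analogue) check out, the degree counts giving $c_m\le s_1$, $s_1+1$, $s_1+2$ in the respective cases are correct, and the deduction of the classical consequence and of $\deg\eta\le 1$ is fine. In fact you supply considerably more detail than the paper, whose proof consists essentially of the two sentences ``multiply by $\rho$'' and ``the class increases/stays the same'' and does not attempt to justify the exact values at all.

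The one step that does not hold up is your minimality argument. You assert that a pair realizes the class precisely when $\gcd(\phi,\widetilde\psi)=1$, and in particular that minimality of $(\phi_1,\psi_1)$ forces $\widetilde\psi_1(c)\ne 0$ at every zero $c$ of $\phi_1$. That is false: Maroni's irreducibility criterion at a zero $c$ of $\phi$ requires $|\widetilde\psi(c)|+|\langle u,\theta_c\psi+\theta_c^2\phi\rangle|\ne 0$, i.e.\ there is an additional moment condition, and common zeros of $\phi$ and $\widetilde\psi$ are compatible with minimality. Concretely, for the Legendre weight $w_1\equiv 1$ one has $\widetilde\psi_1\equiv 0$, and for Jacobi with $\alpha=0$ one has $\widetilde\psi_1=\beta(1-x)$, which shares the zero $x=1$ with $\phi_1=1-x^2$; in such cases your claim that ``the summand $\rho\,\widetilde\psi_1$ survives and is nonzero'' fails, and with $w_1\equiv 1$ and $\rho=x+2$ one even gets $\gcd(\Phi,\widetilde\Psi)=1-x^2\ne 1$ although $c_m=s_1+1$ is still correct (the reduction is blocked by the boundary conditions, not by the gcd). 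So your upper bounds and the classical consequence stand, but the exact equalities $c_m=s_1$ and $c_m=s_1+1$ are not fully established by the gcd argument alone; one needs either the moment conditions or an explicit boundary-condition argument at the zeros of $\rho$ and $\phi_1$. Since the paper itself offers no justification for these equalities, this gap puts you no worse than the published proof, but it should be flagged rather than presented as a complete argument.
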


\begin{proof}

In {\it Case I}, $\rho(x) = r_1\,x+r_0$, with $|r_1|+|r_0|>0$.

If $\rho(x)$ divides $\phi_1(x)$, the
weight $u_m(x)$ is semiclassical of the same class as
$w_1(x)$. On the other hand, if $\rho(x)$ does not divide $\phi_1(x)$, then multiplying \eqref{Pearson-m}
times $\rho(x)$, we deduce that $u_m(x)$ is again semiclassical, but its class increases.

In {\it Case II}, we know that $\rho(x) = \sqrt{a_2\, x^2 + a_1\, x + a_0}$ and
$$\frac{\rho'(x)}{\rho(x)} = \frac{2\,a_2 \,x+ a_1}{2\,\rho(x)^2}.$$
Again, $u_m(x)$ is semiclassical, and if $\rho(x)^2$ divides $\phi_1(x)$, the weight function $u_m(x)$
is semiclassical of the same class as $w_1(x)$, but on the contrary the class increases.

\end{proof}


\bigskip

\section{Two symmetrization methods}

In this section we will explain two methods for symmetrizing (\ref{eq30}). The first method is based
on finding a matrix $S$ with rational entries such that the matrix $S\,\varphi$ is symmetric, that is,
such that the condition $S\varphi-\varphi^tS^t=0$ holds. Additionally, the entries of the matrix $
S\varphi$ and the vector $S\,\delta$ must be polynomials of the lowest possible degree. We choose to
call this first method {\it the matrix symmetrization method}. The second method consists on factorizing
elements of $\varphi$ and $\delta$, such as their entries and $\det\varphi$, and then using these
factorizations to construct auxiliary functions that will help turn (\ref{eq30}) into a matrix Pearson
equation for the weight $w$. This second method will be called {\it the decomposition method}. We will
make these descriptions of both methods in a more precise way in the sequel.

\bigskip

\subsection{The symmetrization method}

We want to symmetrize the matrix $\varphi$ by finding a matrix
\begin{equation}
S\equiv\begin{pmatrix}
A & B\\
C & D
\end{pmatrix},
\end{equation}
where $A=A(x,y),\,B=B(x,y),\,C=C(x,y),$ and $D=D(x,y)$ are rational functions such that $AD-BC\neq 0$
on the interior of $\Omega$. Then, it is required that the matrix $S$ left--multiplies $\varphi$ and
transforms it in a symmetric matrix, that is,
\begin{equation}\label{sym}
\begin{pmatrix}
A & B \\
C & D \end{pmatrix}\begin{pmatrix}
\varphi_1 & \varphi_2 \\
0 & \varphi_3 \end{pmatrix}-\begin{pmatrix}
\varphi_1 & \varphi_2 \\
0 & \varphi_3 \end{pmatrix}^T\begin{pmatrix}
A & B \\
C & D \end{pmatrix}^T=\begin{pmatrix}
0 & 0 \\
0 & 0 \end{pmatrix}.
\end{equation}

Notice that (\ref{sym}) yields the constraint that $A,\, B$, and $C$ must satisfy, namely,
\begin{equation}
A\varphi_2+B\varphi_3-C\varphi_1=0.
\end{equation}
This restriction admits polynomials and rational functions as solutions. Nevertheless, we must point out that polynomial solutions will always increase the degree of $\varphi$.

Furthermore, in order to have a matrix Pearson equation for the Koornwinder weight $w$ satisfying
$S\,\varphi\, \nabla w = S\,\delta \,w$, the vector
\begin{equation*}
\widetilde{\Psi}_{\delta}\equiv S\,\delta=\begin{pmatrix}\delta_1\,A+\delta_2\,B\\ \delta_1\,C+
\delta_2\,D \end{pmatrix},
\end{equation*}
must have polynomial entries. Define the matrix $\Phi_{\varphi}$ and the column vector $\Psi_{\delta}$ as
\begin{equation*}
\Phi_{\varphi}=S\,\varphi =\begin{pmatrix}
A\,\varphi_1 & A\,\varphi_2+B\,\varphi_3 \\
C\,\varphi_1 & C\,\varphi_2+D\,\varphi_3
\end{pmatrix}, \quad \Psi_{\delta}=\widetilde{\Psi}_{\delta}+(\textnormal{div}\,\Phi_{\varphi})^t.
\end{equation*}
Then, $w(x,y)$ satisfies $\textnormal{div}(\Phi_{\varphi} \,w)=\Psi_{\delta}^t\, w$. We must choose $S$ such that $\Phi_{\varphi}$ has minimal degree and $\textnormal{deg}\,\Psi_{\delta}\ge 1$.


\bigskip

\subsection{The decomposition method}

We can always find a common polynomial denominator $E=E(x,y)$ for the rational entries of the
column vector multiplying $w(x,y)$ in the right side of (\ref{eq35}). Note that $E(x,y)\ne0$ for
all $(x,y)$ in the interior of $\Omega$, because if $E(x_0,y_0)=0$ for some $(x_0,y_0)$ in the interior of $\Omega$, then $w(x,y)$ would not be defined at $(x_0,y_0)$.

From now on, we will write (\ref{eq35}) using this common polynomial denominator as
\begin{equation}\label{eq43}
\begin{pmatrix}
E & 0 \\
0 & E
\end{pmatrix}\nabla w= \begin{pmatrix}
F \\
H
\end{pmatrix}w,
\end{equation}\label{eq36}
\begin{equation}
E = a_0\,a_1\,c_1,\quad F = F_0\,c_1, \quad H = H_0\,a_1,
\end{equation}
where $F=F(x,y)$ and $H=H(x,y)$ are polynomials. Observe that (\ref{eq43}) is not necessarily the
desired Pearson equation for $w(x,y)$. We have allowed the possibility of $E$ having common
factors $c_1=c_1(x,y)$ and $a_1=a_1(x,y)$ with $F$ and $H$, respectively. There is no loss of
generality here since we can always get either $c_1$ or $a_1$, or both be equal to 1.

We seek polynomials $a_2=a_2(x,y),\,b_1=b_1(x,y)$, and $c_2=c_2(x,y)$ such that
\begin{equation*}
a_0=a_2c_2-a_1b_1^2c_1.
\end{equation*}

Let introduce the auxiliary functions
\begin{equation}\label{eq39}
a(x,y)=\frac{a_2(x,y)}{a_0(x,y)\,c_1(x,y)},\quad b(x,y)=\frac{b_1(x,y)}{a_0(x,y)},\quad c(x,y)
=\frac{c_2(x,y)}{a_0(x,y)\,a_1(x,y)},
\end{equation}
and the matrix
\begin{equation}\label{eq40}
\begin{pmatrix}
a & b\\
b & c
\end{pmatrix}, \quad ac-b^2=\frac{1}{E}.
\end{equation}

After left--multiplying (\ref{eq35}) by (\ref{eq40}), we get
\begin{equation}\label{eq41}
\begin{pmatrix}
a\,E & b\,E\\
b\,E & c\,E
\end{pmatrix}\nabla w=\begin{pmatrix}
a\,F+b\,H\\
b\,F+c\,H
\end{pmatrix}w.
\end{equation}
From the decomposition method, to find a Pearson equation for $w$ we must to obtain three polynomials $a_2,\,b_1$, and $c_2$ such that the matrix coefficient in (\ref{eq41}) has polynomial entries of lowest total degree possible, and the column vector on the right side of the same equation has polynomial entries.


\bigskip

\section{Examples}

In this section we will denote by $\{P_n^{(\alpha,\beta)}\}_{n\ge 0}$ the sequence of classical Jacobi polynomials associated with the weight function
\begin{equation*}
w^{(\alpha,\beta)}(x)=(1-x)^{\alpha}(1+x)^{\beta}, \quad -1\le x\le 1, \quad \alpha,\beta>-1
\end{equation*}
(see \cite{Ch78,Sz78}). The Pearson equation for Jacobi polynomials is 
\begin{equation*}
\frac{d}{dx}\left[(1-x^2)\,w^{(\alpha,\beta)}\right] =
[\beta-\alpha -(\alpha+\beta+2)x]w^{(\alpha,\beta)},
\end{equation*}
that is, $\phi(x) = 1-x^2$ and $\psi(x) = \beta-\alpha -(\alpha+\beta+2)x$.

\medskip

Classical Jacobi polynomials can be defined on the interval $[0,1]$. In this case, the weight function is given by
\begin{equation*}
u^{(\alpha,\beta)}(x)=(1-x)^{\alpha}x^{\beta}, \quad \alpha,\beta>-1,
\end{equation*}
and the Pearson equation for $u^{(\alpha, \beta)}$ is
\begin{equation*}
\frac{d}{dx}\left[(1-x)x\,u^{(\alpha,\beta)}\right]=[\beta+1 - (\alpha+\beta+2)x]u^{(\alpha,\beta)}.
\end{equation*}
In this case, $\phi(x) = (1-x)x$ and $\psi(x) = \beta+1 -(\alpha+\beta+2)x$.

\medskip

On the other hand, we will denote by $\{L^{(\alpha)}_n\}_{n\ge0}$ the sequence of classical Laguerre polynomials associated with the weight function
\begin{equation*}
w^{(\alpha)}(x)=x^{\alpha}e^{-x}, \quad 0\le x<\infty,\quad \alpha>-1,
\end{equation*}
whose Pearson equation is 
\begin{equation*}
\frac{d}{dx}\left[x\,w^{(\alpha)}\right] = [\alpha + 1 - x]w^{(\alpha)},
\end{equation*}
where $\phi(x) = x$ and $\psi(x) = \alpha + 1 - x$.


\subsection{Ball polynomials}

Let
\begin{equation*}
\mathbf{B}=\{(x,y)\in\mathbb{R} :\, x^2+y^2\le 1\},
\end{equation*}
be the unit disk in $\mathbb{R}^2$, and let
\begin{equation*}
w(x,y)=(1-x^2-y^2)^{\alpha}, \quad \alpha>-1,
\end{equation*}
be the weight function. Ball polynomials can be constructed by using Koornwinder's
method taking
\begin{align*}
&w_1(x)=(1-x^2)^{\alpha}, \quad -1\le x\le 1,\\
&w_2(y)=(1-y^2)^{\alpha}, \quad -1\le y\le 1,\\
&\rho(x)=\sqrt{1-x^2}.
\end{align*}
Then, ball polynomials can be defined as
\begin{equation*}
P_{n,m}(x,y)=P_{n-m}^{(\alpha+m+1/2,\alpha+m+1/2)}(x)\,(1-x^2)^{m/2}\,P_m^{(\alpha,\alpha)}
\left(\frac{y}{1-x^2}\right), \quad 0\le m\le n.
\end{equation*}
Observe that, in this case, $\phi_1(x) = \phi_2(x) = \rho(x)^2$, the weight function
\begin{equation*}
w(x,y)=w_1(x)w_2\left(\frac{y}{\rho(x)}\right)=(1-x^2-y^2)^{\alpha},\quad \alpha>-1,
\end{equation*}
satisfies (\ref{eq30}) where
\begin{equation}\label{eq42}
\varphi=\begin{pmatrix}
1-x^2 & -xy \\
0 & 1-x^2-y^2
\end{pmatrix}, \qquad \psi=\begin{pmatrix}
-2\alpha x \\ -2\alpha y\end{pmatrix}.
\end{equation}
A suitable choice for the symmetrization matrix of (\ref{eq42}) is
\begin{equation*}
S =
\begin{pmatrix}
1 & 0\\
\dfrac{-xy}{1-x^2} & \dfrac{1}{1-x^2}
\end{pmatrix},
\end{equation*}
and after a symmetrization using $S$, we recover the well known matrix Pearson equation for
ball weight
\begin{equation}\label{eq44}
\begin{pmatrix}
1-x^2 & -xy \\
-xy & 1-y^2
\end{pmatrix}\,\nabla w=\begin{pmatrix}
-2\alpha x \\ -2\alpha y
\end{pmatrix}\,w.
\end{equation}
The second order linear partial differential operator for ball polynomials is
$$
\mathcal{L}[\cdot]=
(1-x^2)\partial_{xx}-2xy\partial_{xy}+(1-y^2)\partial_{yy}-(2\alpha+3)x\partial_x
-(2\alpha+3)y\partial_y,
$$
and, therefore, ball polynomials satisfy the Krall and Sheffer second order
linear partial differential equation
$$\mathcal{L}[P_{n,m}] = -n(n+2\alpha+2)P_{n,m}.
$$
If the decomposition method is used, then a suitable choice of auxiliary functions (\ref{eq39}) are
\begin{equation*}
a(x,y)=\frac{1-x^2}{1-x^2-y^2},\quad b(x,y)=\frac{-xy}{1-x^2-y^2}, \quad c(x,y)=\frac{1-y^2}{1-x^2-y^2},
\end{equation*}
and we obtain again (\ref{eq44}). Another suitable choice of auxiliary functions is
\begin{equation*}
a(x,y)=1,\quad b(x,y)=0,\quad c(x,y)=1,
\end{equation*}
and we obtain another Pearson equation (see \cite{Le00}),
\begin{equation}
\begin{pmatrix}
1-x^2-y^2 & 0\\
0 & 1-x^2-y^2
\end{pmatrix}\nabla w=\begin{pmatrix}
-2\alpha x \\ -2\alpha y
\end{pmatrix}w.
\end{equation}

\bigskip


\subsection{Koornwinder polynomials over the parabolic biangle}

For $\alpha,\beta>-1$, the polynomials
\begin{equation*}
P_{n,m}(x,y) = P_{n-m}^{(\alpha,\beta+m+1/2)}(2x-1)\,x^{m/2}\,P_m^{(\beta,\beta)}\left(\frac{y}{x} \right),\quad 0\le m\le n,
\end{equation*}
are orthogonal polynomials associated with the Koornwinder weight function
\begin{equation*}
w(x,y)=(1-x)^{\alpha}(x-y^2)^{\beta},
\end{equation*}
on the parabolic biangle
\begin{equation*}
\Omega=\{(x,y)\in\mathbb{R}^2:\,y^2<x<1\},
\end{equation*}
with boundary
$$
\partial \Omega=\{x-y^2=0, 0\le x\le 1\}\cup\{1-x=0, -1\le y\le 1 \}.
$$
These polynomials are obtained from the Koornwinder construction with
\begin{align*}
&w_1(x)=(1-x)^{\alpha}x^{\beta},\quad 0\le x\le1,\\
&w_2(y)=(1-y^2)^{\beta}, \quad  -1\le y\le 1,\\
&\rho(x)=\sqrt{x}.
\end{align*}
Since $\phi_1(x)=(1-x)x,\,\phi_2(y)=1-y^2$, equation (\ref{eq30}) reads
\begin{equation*}
\begin{pmatrix}
(1-x)x & \frac{1}{2}(1-x)y \\
0 & x-y^2
\end{pmatrix}\,\nabla w = \begin{pmatrix}
\beta-(\alpha+\beta)x \\
-2\beta y
\end{pmatrix}w.
\end{equation*}
A suitable choice for the simmetrization matrix is
\begin{equation*}
S=\begin{pmatrix}
1 & 0 \\
\dfrac{y}{2x} & -\dfrac{1}{4x}
\end{pmatrix},
\end{equation*}
and the resulting Pearson equation is
\begin{equation*}
\begin{pmatrix}
(1-x)x & \frac{1}{2}(1-x)y\\
\frac{1}{2}(1-x)y & \frac{1}{4}(1-y^2)
\end{pmatrix} \nabla w=\begin{pmatrix}
\beta-(\alpha+\beta)x\\
-\frac{1}{2}(\alpha+\beta)y
\end{pmatrix}w.
\end{equation*}
In this case, the associate second order linear partial differential operator is 
$$
\mathcal{L}[\cdot]=2(1-x)x\partial_{xx}+2(1-x)y\partial_{xy}+\frac{1}{2}(1-y^2)\partial_{yy}
+[2\beta+3-(2\alpha+2\beta+5)x]\partial_x-(\alpha+\beta+2)y\partial_y,
$$
and the corresponding second order linear partial differential equation satisfied by the sequence of bivariate polynomials
is
$$\mathcal{L}[P_{n,m}]= -\left[(n-m)(2n+2\alpha+2\beta+5)+\frac{1}{2}m(m+2\alpha+2\beta+3)\right]P_{n,m}.
$$
If the decomposition method is used, then a suitable choice of auxiliary functions is
\begin{equation*}
a(x,y)=\frac{2x}{x-y^2}, \quad b(x,y)=\frac{y}{x-y^2}, \quad c(x,y)=\frac{1-y^2}{2(1-x)(x-y^2)},
\end{equation*}
and we obtain again the same matrix Pearson equation. Notice that the Koornwinder polynomials over
the parabolic biangle are classical.


\bigskip

\subsection{Koornwinder polynomials over the triangle}

Following \cite{Koor75}, for $\alpha, \beta, \gamma>-1$ these polynomials correspond to
\begin{align*}
&w_1(x)=(1-x)^{\alpha}x^{\beta+\gamma}, \quad 0\le x\le1,\\
&w_2(y)=(1-y)^{\beta}y^{\gamma}, \quad 0\le y\le 1,\\
&\rho(x)=x,
\end{align*}
on the triangle
\begin{equation*}
\mathbf{T}=\{(x,y)\in\mathbb{R}^2:\,0<y<x<1\}.
\end{equation*}
The polynomials
\begin{equation*}
P_{n,m}(x,y)=P_{n-m}^{(\alpha,\beta+\gamma+2m+1)}(2x-1)\,x^m\,P_m^{(\beta,\gamma)}\left(\frac{2y}{x}-1\right), \quad 0\le m\le n,
\end{equation*}
are orthogonal with respect to the weight function
\begin{equation*}
w(x,y)=(1-x)^{\alpha}(x-y)^{\beta}y^{\gamma}.
\end{equation*}
Notice that $\phi_1(x)=\phi_2(x)=(1-x)x$, and (\ref{eq30}) reads
\begin{equation*}
\begin{pmatrix}
(1-x)x & (1-x)y \\
0 & (x-y)y
\end{pmatrix}\,\nabla w = \begin{pmatrix}
\beta+\gamma-(\alpha+\beta+\gamma)x \\
\gamma x-(\beta+\gamma)y
\end{pmatrix}\,w.
\end{equation*}
This matrix equation is symmetrized by left multiplication times
\begin{equation*}
S =
\begin{pmatrix}
1 & 0\\
\dfrac{y}{x} & \dfrac{1}{x}
\end{pmatrix},
\end{equation*}
and the resulting Pearson equation is
\begin{equation*}
\begin{pmatrix}
(1-x)x & (1-x)y \\
(1-x)y & (1-y)y
\end{pmatrix}\nabla w=\begin{pmatrix}
\beta+\gamma-(\alpha+\beta+\gamma)x\\
\gamma-(\alpha+\beta+\gamma)y
\end{pmatrix}w.
\end{equation*}
The second order linear partial differential equation satisfied by the Koornwinder polynomials over the triangle is
$$
\mathcal{L}[P_{n,m}]= -n(n+\alpha+\beta+\gamma+2) P_{n,m},$$
where
\begin{eqnarray*}
\mathcal{L}[\cdot] &=& (1-x)x\partial_{xx}+2(1-x)y\partial_{xy}+(1-y)y\partial_{yy}\\
&~&
+[\beta+\gamma+2-(\alpha+\beta+\gamma+3)x]\partial_x+[\gamma+1-(\alpha+\beta+\gamma+3)y]\partial_y.
\end{eqnarray*}
If the decomposition method is used, we get the same equation by choosing the auxiliary functions as
\begin{equation*}
a(x,y)=\frac{x}{(x-y)y}, \quad b(x,y)=\frac{1}{x-y}, \quad c(x,y)=\frac{1-y}{(1-x)(x-y)}.
\end{equation*}
Observe that the Koornwinder polynomials over the triangle are classical.


\subsection{Laguerre--Jacobi Koornwinder polynomials}

In \cite{FPP12} some new examples of Koornwinder bivariate weight functions
were introduced. This two examples are studied here.

Consider the Laguerre and Jacobi weight functions in one variable
\begin{align*}
&w_1(x)=x^{\alpha}e^{-x}, \quad 0\le x<\infty, \quad \alpha>-1,\\
&w_2(y)=(1-y)^{\beta}, \quad -1\le y \le 1,\quad \beta>-1.
\end{align*}
The polynomials
\begin{equation*}
P_{n,m}(x,y) = L_{n-m}^{(\alpha+2m+1)}(x)\,x^m\,P_m^{(0,\beta)}\left( \frac{y}{x}\right), \quad 0\le m\le n,
\end{equation*}
are orthogonal with respect to
\begin{equation*}
w(x,y)=x^{\alpha-\beta}e^{-x}(x-y)^{\beta},
\end{equation*}
defined on the unbounded region
\begin{equation*}
\Omega=\{(x,y)\in\mathbb{R}^2:\,-x<y<x,\,x>0\}.
\end{equation*}
Here $\phi_1(x)=x,\,\phi_2(y)=1-y^2$, and thus (\ref{eq30}) reads
\begin{equation*}
\begin{pmatrix}
x & y\\
0 & x^2-y^2
\end{pmatrix}\,\nabla w= \begin{pmatrix}
\alpha-x\\ -\beta(x+y)
\end{pmatrix}\,w.
\end{equation*}
Multiplying this equation by the symmetrization matrix
\begin{equation*}
S =
\begin{pmatrix}
1 & \dfrac{1}{x+y}\\
\\
1 & 1+\dfrac{1}{x+y}
\end{pmatrix},
\end{equation*}
we get the following Pearson equation for $w$
\begin{equation*}
\begin{pmatrix}
x & x\\
x & x^2-y^2+x
\end{pmatrix}\nabla w=\begin{pmatrix}
\alpha-\beta-x\\ -\beta(x+y)+(\alpha-\beta-x)
\end{pmatrix}w.
\end{equation*}
A suitable choice of auxiliary functions for the decomposition method is
\begin{equation*}
a(x,y)=\frac{1}{x^2-y^2}, \quad b(x,y)=\frac{1}{x^2-y^2},\quad c(x,y)=\frac{x^2-y^2+y}{x(x^2-y^2)},
\end{equation*}
and the resulting Pearson equation is
\begin{equation*}
\begin{pmatrix}
x & x\\
x & x^2-y^2+y
\end{pmatrix}\nabla w=\begin{pmatrix}
\alpha-\beta-x\\ -\beta(x+y)+\alpha-x
\end{pmatrix}w.
\end{equation*}
The Laguerre--Jacobi Koornwinder polynomials satisfy the difference--differential equation
$$\mathcal{L}[P_{n,m}] = \lambda_{n,m}\,P_{n,m}+\lambda_{n,m-1}\,P_{n,m-1}+\lambda_{n,m-2}\,P_{n,m-2},
$$
where
$$
\mathcal{L}[\cdot] =
x\partial_{xx}+2x\partial_{xy}+(x^2-y^2+x)\partial_{yy}
+(1+\alpha-\beta-x)\partial_x+[\alpha-\beta+1-(1+\beta)x-(2+\beta)y]\partial_y,
$$
and
\begin{align*}
&\lambda_{n,m}=-n-m(m+\beta),\\
&\lambda_{n,m-1}=-(m-1)(\beta+1),\\
&\lambda_{n,m-2}=m(m-1).
\end{align*}
Notice that Laguerre--Jacobi Koornwinder polynomials are classical according to Theorem \ref{teor-clas}.

\bigskip


\subsection{Laguerre--Laguerre Koornwinder polynomials}

In \cite{FPP12} the Laguerre weight functions in one variable were considered
\begin{align*}
&w_1(x)=x^{\alpha}e^{-x}, \quad 0\le x<\infty, \quad \alpha>-1,\\
&w_2(y)=y^{\beta}e^{-y}, \quad 0\le y<\infty, \quad \beta>-1,\\
&\rho(x)=x, \quad \alpha-\beta >-1,
\end{align*}
then Laguerre--Laguerre Koornwinder polynomials defined by
\begin{equation*}
P_{n,m}(x,y)=L^{(\alpha+2m+1)}_{n-m}(x)\,x^m\,L^{(\beta)}_m\left(\frac{y}{x}\right), \quad 0\le m\le n,
\end{equation*}
are orthogonal with respect to the weight function
\begin{equation*}
w(x,y)=x^{\alpha-\beta}y^{\beta}e^{-(x+y/x)},
\end{equation*}
on the unbounded region $\Omega=[0,\infty)\times[0,\infty)$.
Here, equation (\ref{eq30}) reads
\begin{equation*}
\begin{pmatrix}
x & y \\
0 & xy
\end{pmatrix}\,\nabla w=\begin{pmatrix}
\alpha-x \\ (\beta+1)x-y
\end{pmatrix}\, w,
\end{equation*}
and a suitable symmetrization matrix is
\begin{equation*}
S =
\begin{pmatrix}
x & 0\\
y & 1
\end{pmatrix}.
\end{equation*}
A convenient choice of auxiliary equations for the decomposition method is
\begin{equation*}
a(x,y)=\frac{1}{xy}, \quad b(x,y)=\frac{1}{x^2}, \quad c(x,y)=\frac{x+y}{x^3}.
\end{equation*}
The resulting Pearson equation for $w$ is
\begin{equation*}
\begin{pmatrix}
x^2 & xy \\
xy & (x+y)y
\end{pmatrix}\nabla w=\begin{pmatrix}
(\alpha-x)x \\ (\alpha-1)y+\beta x-xy
\end{pmatrix}w.
\end{equation*}
From equation (\ref{eq35}) for this case, we conclude that $w$ also satisfies the Pearson equation
\begin{equation*}
\begin{pmatrix}
x^2 & 0\\
0 & xy
\end{pmatrix}\nabla w=\begin{pmatrix}
(\alpha-\beta-1-x)x+y \\ (\beta+1)x-y
\end{pmatrix} w,
\end{equation*}
and the Laguerre-Laguerre Koornwinder polynomials satisfy the difference--differential relation
$$
\mathcal{L}[P_{n,m}] = \lambda_{n+1,m}P_{n+1,m} + \lambda_{n,m+1} P_{n,m+1} + \lambda_{n,m} P_{n,m}+\lambda_{n,m-1}P_{n,m-1},
$$
where
$$\mathcal{L}[\cdot] =
x^2\partial_{xx} + x y\partial_{yy} + [(\alpha-\beta+1-x)x+y]\partial_x+[(\beta+2)x-y]\partial_y,
$$
and \begin{align*}
&\lambda_{n+1,m}=-(n-m),\\
&\lambda_{n,m+1}=n-m+m(m-1),\\
&\lambda_{n,m}=(n-m)(n-m+\alpha+\beta)-m,\\
&\lambda_{n,m-1}=(m-1)(\beta+2).
\end{align*}
Observe that the Laguerre--Laguerre Koornwinder weight satisfy a matrix Pearson equation with
$\deg\Phi =\deg\Psi =2$, and they are semiclassical.

\bigskip

\end{document}